\documentclass[12pt]{amsart}
\usepackage[utf8]{inputenc}
\usepackage[margin=1in]{geometry}

\usepackage[
    backend=biber,
    style=ieee,
    citestyle=numeric-comp,
    sorting=nyt,
    dashed=false,
  ]{biblatex}
\addbibresource{bibtex.bib}

\usepackage{hyperref}
\usepackage[hyphenbreaks]{breakurl}
\usepackage{xurl}
\hypersetup{
    colorlinks=true,
    linkcolor=blue,
    citecolor=red,
    breaklinks=true
}

\usepackage{graphicx}
\usepackage{amsmath}
\usepackage{amssymb}
\usepackage{appendix}
\usepackage{amsthm}
\usepackage{booktabs}
\usepackage{graphicx}
\usepackage{subcaption}
\usepackage{comment}
\usepackage{enumerate}
\usepackage{esint}

\usepackage[dvipsnames]{xcolor}

\newtheorem{theorem}{Theorem}[section]
\newtheorem{theorem*}{Theorem}

\newtheorem{prop}[theorem]{Proposition}

\theoremstyle{definition}
%[section]
\theoremstyle{remark}

\usepackage{amsmath}
\usepackage{amssymb}
\usepackage{mathrsfs}

\newcommand{\R}{{\mathbb R}}

\newcommand{\F}{{\mathcal{F}}}

\newcommand{\cS}{{\mathcal{S}}}

\newcommand{\sS}{{\mathscr{S}}}

\newcommand{\tr}{{\text{tr}}}

\usepackage{todonotes}

\subjclass[2020]{42B10, 47G30, 47B10}

\title[Weyl quantisation of Paley-Wiener functions]{A remark on the Weyl quantisation of Paley-Wiener functions}

\author{Helge J. Samuelsen}
\address{Department of Mathematical Sciences,
         Norwegian University of Science and Technology,
         Trondheim, Norway}

\email{helge.j.samuelsen@ntnu.no}
\date{\today}

\begin{document}

\begin{abstract}
We present a short proof of the fact that the Weyl quantisation of a tempered distribution with compactly supported Fourier transform is in the Schatten $p$-class if and only if the symbol is $L^p$-integrable. The proof is based on Werner-Young's inequality from quantum harmonic analysis and a quantum version of Wiener's division lemma.
\end{abstract}
\maketitle

\section{Introduction}
The theory of quantum harmonic analysis was introduced by Werner, and allows for Fourier analysis of operators \cite{Werner}. In this setting, the appropriate replacement of the classical $L^p$-spaces are the Schatten $p$-classes of compact operators $T$ on $L^2(\R^d)$ such that
\[
\|T\|_{\cS^p}=\left(\tr(|T|^p)\right)^\frac{1}{p}<\infty.
\]
Several classical results have been extended to the setting of quantum harmonic analysis, such as Wiener's Tauberian theorem \cite{Luef_Skrettingland_21, Fulsche_Luef_Werner_24}, Fourier restriction \cite{Luef_Samuelsen_24, Muller} and decoupling \cite{Samuelsen_24}. 

From the inherent symplectic structure on $\R^{2d}$, it is convenient to introduce the symplectic Fourier transform when working with quantum harmonic analysis.  Let $\sigma$ denote the standard symplectic form on $\R^{2d}$, and define the symplectic Fourier transform by
\[
\F_\sigma(\Psi)(\zeta)=\int_{\R^{2d}}e^{-2\pi i \sigma(\zeta, z)}\Psi(z)\,dz,
\]
for $\Psi\in \sS(\R^{2d})$. The symplectic Fourier transform extends to an isomorphism on $\sS'(\R^{2d})$ through duality by
\[
\langle \F_\sigma(\tau),\Psi\rangle_{\sS',\sS}=\langle \tau,\F_\sigma{\Psi}\rangle_{\sS',\sS},
\]
for $\tau\in \sS'(\R^{2d})$ and all $\Psi\in \sS(\R^{2d})$.

The theory of quantum harmonic analysis is based on the isometric isomorphism between $L^2(\R^{2d})$ and the class of Hilbert-Schmidt operators $\cS^2$ for the Weyl quantisation \cite{Pool}. For a tempered distribution $\tau\in \sS'(\R^{2d})$, the Weyl quantisation is defined through a sesquilinear dual pairing as the bounded linear operator $L_\tau:\sS(\R^d)\to \sS'(\R^{d})$ such that
\[
\langle L_\tau \varphi,\psi\rangle_{\sS',\sS}=\langle \tau, \mathcal{W}(\psi,\varphi)\rangle_{\sS',\sS},
\]
for every $\varphi,\psi\in \sS(\R^{d})$. The tempered distribution $\tau$ is called the symbol of the operator $L_\tau$, and $\mathcal{W}(\psi,\varphi)$ is the cross-Wigner distribution defined as
\[
\mathcal{W}(\psi,\varphi)(x,\xi)=\int_{\R^d}\varphi\left(x+\frac{t}{2}\right)\overline{\psi\left(x-\frac{t}{2}\right)}e^{-2\pi i \xi\cdot t}\,dt.
\]

Unlike the case $p=2$, there is generally no equivalence between $L^p$-integrability of the symbol and Schatten properties of the Weyl quantisation. Nevertheless, there is an equivalence whenever the symbol has compact Fourier support.
\begin{theorem*}\label{thm:MainThm}
Let $\tau\in \sS'(\R^{2d})$, and assume that $\F_\sigma(\tau)$ is a compactly supported distribution on $\R^{2d}$. Then $L_\tau$ is a compact operator on $L^2(\R^{2d})$ if and only if $\tau\in C_0(\R^{2d})$, and $L_\tau\in \cS^p$ if and only if $\tau\in L^p(\R^{2d})$ for $1\leq p\leq\infty$.
\end{theorem*}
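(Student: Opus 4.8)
The plan is to reduce both equivalences to the corresponding statements for classical convolution with a fixed Gaussian, which are then handled by the Werner--Young inequality together with an explicit division argument. Throughout, write $\alpha_z$ for conjugation by the phase-space shift $\pi(z)$, and recall the two basic facts of quantum harmonic analysis: for a function $f$ and an operator $T$ with Weyl symbol $a_T$ one has $f \star T = L_{f * a_T}$ (the covariance $\alpha_z(L_a)=L_{a(\cdot-z)}$ turns the operator convolution into a classical convolution of symbols), and for two operators the convolution $S \star T$ is a function whose symplectic Fourier transform factorises, $\F_\sigma(S\star T)=\F_\sigma(a_S)\,\F_\sigma(a_T)$ when $S,T$ are parity-even. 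Fix the rank-one projection $P=\phi_0\otimes\phi_0$ onto the normalised Gaussian $\phi_0$; it lies in $\cS^1$, is parity-even, and its Weyl symbol $g$ is a Gaussian, so $\F_\sigma(g)$ is a nowhere-vanishing Schwartz function. With this choice $L_\tau \star P = \tau * g$ as functions. The two instances of Werner--Young I will use are $\|f\star P\|_{\cS^p}\le \|f\|_{L^p}\|P\|_{\cS^1}$ and $\|L_\tau\star P\|_{L^p}\le \|L_\tau\|_{\cS^p}\|P\|_{\cS^1}$, valid for $1\le p\le\infty$ (with the endpoint reading $\cS^\infty=\mathcal B(L^2)$), together with their refinements sending $C_0$ to compact operators and compact operators to $C_0$.

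The key device is a quantum Wiener division lemma that inverts convolution by $g$ on distributions with Fourier support in a fixed compact set. Since $\F_\sigma(\tau)$ is compactly supported, say in $K$, choose $\chi\in C_c^\infty(\R^{2d})$ with $\chi\equiv 1$ on a neighbourhood of $K$, put $\psi=\chi/\F_\sigma(g)\in C_c^\infty(\R^{2d})$, and set $k=\F_\sigma(\psi)\in\sS(\R^{2d})$, so that $\F_\sigma(k)=\psi$. For any $h\in\sS'(\R^{2d})$ with $\operatorname{supp}\F_\sigma(h)\subseteq K$ one then gets $\F_\sigma\big((g*h)*k\big)=\F_\sigma(g)\F_\sigma(h)\psi=\F_\sigma(h)$, hence $(g*h)*k=h$. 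Note that both $\tau$ and $\tau*g$ have Fourier support inside $K$, so the \emph{same} kernel $k$ inverts the Gaussian convolution in both directions.

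For the ``if'' direction, suppose $\tau\in L^p(\R^{2d})$. Set $f=\tau*k$; by Young's inequality $f\in L^p(\R^{2d})$ because $k\in\sS\subseteq L^1$, and by the division identity $f*g=\tau$, whence $L_\tau=L_{f*g}=f\star P$. Werner--Young then gives $\|L_\tau\|_{\cS^p}\le\|f\|_{L^p}\|P\|_{\cS^1}<\infty$, so $L_\tau\in\cS^p$; the same computation with $L^\infty$ and with $C_0$ in place of $L^p$ yields boundedness and compactness respectively. Conversely, for the ``only if'' direction suppose $L_\tau\in\cS^p$. Then $F:=L_\tau\star P=\tau*g$ lies in $L^p(\R^{2d})$ by the dual Werner--Young inequality, its Fourier transform $\F_\sigma(F)=\F_\sigma(\tau)\F_\sigma(g)$ is supported in $K$, and the division lemma gives $\tau=F*k\in L^p(\R^{2d})$ by Young's inequality; the endpoint versions again give $\tau\in C_0$ from compactness of $L_\tau$ and $\tau\in L^\infty$ from boundedness.

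The substantive point --- and the only place compact Fourier support is used --- is the division step: it is exactly the compactness of $\operatorname{supp}\F_\sigma(\tau)$ together with the non-vanishing of the Gaussian $\F_\sigma(g)$ that allows the reciprocal $1/\F_\sigma(g)$ to be cut off to a compactly supported smooth multiplier and realised as convolution by a Schwartz kernel. I expect the main obstacle to be bookkeeping at the level of tempered distributions: justifying the factorisation $\F_\sigma(L_\tau\star P)=\F_\sigma(\tau)\F_\sigma(g)$ and the identity $L_\tau\star P=\tau*g$ for a merely tempered symbol rather than a Schwartz one, and checking that $\tau$, a priori only a distribution, is the smooth polynomially bounded function supplied by Paley--Wiener--Schwartz, so that the convolutions and the $L^p$/$C_0$ membership statements are meaningful.
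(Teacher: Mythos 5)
Your argument is correct, and at the top level it has the same shape as the paper's --- Werner--Young plus a quantum Wiener division lemma --- but the two proofs distribute the work quite differently, so a comparison is worthwhile. For the implication $\tau\in L^p\Rightarrow L_\tau\in\cS^p$ the paper invokes the quoted division-lemma corollary (Theorem \ref{thm:SamuelsenThm}) as a black box, whereas you prove that step from scratch: since $\F_\sigma(g)$ is a nowhere-vanishing Gaussian and $\F_\sigma(\tau)$ has compact support, the multiplier $\chi/\F_\sigma(g)$ lies in $C^\infty_c$, yielding a Schwartz kernel $k$ with $\tau=(\tau*k)*g$, hence $L_\tau=(\tau*k)\star P$, to which the function--operator Werner--Young inequality $\|f\star P\|_{\cS^p}\leq\|f\|_{L^p}\|P\|_{\cS^1}$ applies. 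This is essentially a self-contained proof of the cited corollary, at the price of using the function--operator half of Werner's convolution theory, which the paper never needs to state. For the converse implication the paper divides ``trivially'' --- it writes $\tau=\tau*\F_\sigma(\Psi)$ with $\Psi$ a cutoff equal to $1$ on the support, so no non-vanishing hypothesis is needed --- while you first convolve with $g$ via $L_\tau\star P$ and then deconvolve with the same kernel $k$; both work, yours with one extra (harmless) step. For the $C_0$/compactness equivalence the paper argues by density of $\sS(\R^{2d})$ in $C_0$ and of Schwartz operators in the compacts, together with the already-established $L^\infty$ bounds, whereas you invoke the endpoint clauses $C_0\star\cS^1\subseteq\mathcal{K}$ and $\mathcal{K}\star\cS^1\subseteq C_0$ of Werner's correspondence theorem directly; this is legitimate (these are in Werner's original paper) but relies on more than the stated Proposition \ref{Young op-op}, which only covers the Schatten scale. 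Two minor remarks: the factorisation $\F_\sigma(S\star T)=\F_\sigma(a_S)\F_\sigma(a_T)$ holds without any parity-evenness assumption, since the parity operator is built into the definition of $\star$, so your caveat there is unnecessary (though harmless, as $P=\phi_0\otimes\phi_0$ is parity-even); and your closing concern about identifying $\tau$ with a function is resolved exactly as you suspect, by Paley--Wiener--Schwartz, or more directly by noting that the division identity $\tau=F*k$ with $k\in\sS(\R^{2d})$ exhibits $\tau$ as a smooth function in the relevant $L^p$ or $C_0$ class.
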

Distributions with compact Fourier support are necessarily real analytic functions on $\R^{2d}$ by \cite[Thm. $7.3.1$]{Hormander-ALPDO1}, and thus Theorem \ref{thm:MainThm} says that the Weyl quantisation is compact if and only if the symbol vanishes at infinity.

A version of Theorem \ref{thm:MainThm} first surfaced in a paper on Fourier restriction in  quantum harmonic analysis \cite{Luef_Samuelsen_24}. Here it was proved under the assumption that $\F_\sigma(\tau)$ is a compactly supported Radon measure. The theorem was later extended to tempered distributions with compact Fourier support by Mishra and Vemuri \cite{Mishra_comp}, and independently by M\"{u}ller \cite{Muller}.

The aim of this paper is to present a short proof of Theorem \ref{thm:MainThm} using techniques from quantum harmonic analysis. For a detailed exposition of quantum harmonic analysis, we refer to \cite{Luef_Samuelsen_24, Luef_Skrettingland_19, Luef_Skrettingland_21, Samuelsen_24,Werner}.

\section{Proof of Theorem \ref{thm:MainThm}}
The main tool from quantum harmonic analysis needed for the proof of Theorem \ref{thm:MainThm} is Werner's operator convolution given by
\begin{equation}\label{eq:DefConv}
T\star S(z)=\tr(T\alpha_z(PSP)),\qquad z\in\R^{2d}.
\end{equation}
Here $Pf(t)=f(-t)$ denotes the parity operator and $\alpha_z$ the operator translation
\[
\alpha_z(T)=\rho(z)T\rho(-z).
\]
The operator $\rho:\R^{2d}\to \mathcal{L}(L^2(\R^d))$ is the symmetric time-frequency shift given by
\[
\rho(x,\xi)f(t)=e^{-\pi i x\cdot \xi}e^{2\pi i \xi\cdot t}f(t-x).
\]
An important property of the operator convolution is that it coincides with the convolution of the Weyl symbols. More specifically, for $\tau\in \sS'(\R^{2d})$ and $\Phi\in \sS(\R^{2d})$ we have
\begin{equation}\label{eq:ConvEquiv}
L_\tau\star L_{\Phi}=\tau*\Phi,
\end{equation}
as they both have the same symplectic Fourier transform \cite{Luef_Skrettingland_19}. 

The proof of Theorem \ref{thm:MainThm} relies on two theorems related to the operator convolution defined by \eqref{eq:DefConv}. The first is a version of Young's inequality for the operator convolution \cite[Thm $3.3$]{Werner}.
\begin{prop}[Werner-Young's inequality]\label{Young op-op}
Suppose $S\in \cS^p$ and $T\in \cS^q$ with $1+r^{-1}=p^{-1}+q^{-1}$.
Then $S\star T\in L^r(\R^{2d})$ and
\begin{align*}
    \|S\star T\|_{L^{r}}\leq \|S\|_{\cS^p}\|T\|_{\cS^q}.
\end{align*}
\end{prop}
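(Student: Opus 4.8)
The plan is to prove Werner–Young's inequality by interpolating between the two endpoint cases that can be established directly, mirroring the standard proof of the classical Young inequality.

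First I would establish the endpoint $p = q = 1$, $r = \infty$. Here $S, T \in \cS^1$ are trace-class, and the claim reduces to the bound $\norm{S\star T}_{L^\infty} \leq \norm{S}_{\cS^1}\norm{T}_{\cS^1}$. From the defining formula \eqref{eq:DefConv}, for each fixed $z$ we have $S\star T(z) = \tr(S\alpha_z(PTP))$, so I would bound this pointwise using the elementary trace estimate $|\tr(AB)| \leq \norm{A}_{\cS^1}\norm{B}_{\mathcal{L}}$ together with the fact that $\rho(z)$ and $P$ are unitary, hence $\alpha_z(PTP)$ is a trace-class operator with $\norm{\alpha_z(PTP)}_{\cS^1} = \norm{T}_{\cS^1}$ and in particular operator norm at most $\norm{T}_{\cS^1}$. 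Taking the supremum over $z$ gives the $L^\infty$ bound.

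Next I would establish the other endpoint $p = 1$, $q = \infty$, $r = 1$ (and, by the symmetry $S\star T = T\star S$ of the operator convolution, equivalently $p=\infty$, $q=1$). Here $S \in \cS^1$ and $T \in \mathcal{L}(L^2)$ is bounded, and I must show $\norm{S\star T}_{L^1} \leq \norm{S}_{\cS^1}\norm{T}_{\mathcal{L}}$. The natural route is to write $S$ in its Schmidt/spectral decomposition $S = \sum_n \lambda_n \langle\,\cdot\,, \psi_n\rangle\varphi_n$ with $\sum_n |\lambda_n| = \norm{S}_{\cS^1}$, reducing by the triangle inequality to rank-one operators, and then to compute the $L^1$-norm of the resulting convolution $(\langle\,\cdot\,,\psi\rangle\varphi)\star T$ explicitly via the matrix coefficients $z \mapsto \langle \rho(z)\eta,\zeta\rangle$, whose integrability against bounded $T$ is controlled by the orthogonality relations for the time-frequency shifts $\rho(z)$.

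Finally I would invoke complex interpolation on the two parameters to fill in the intermediate exponents. Fixing the relation $1 + r^{-1} = p^{-1} + q^{-1}$, the bilinear map $(S,T)\mapsto S\star T$ sends $\cS^1\times\cS^1 \to L^\infty$ and $\cS^1\times\mathcal{L}\to L^1$ (and symmetrically $\mathcal{L}\times\cS^1\to L^1$) with norm at most $1$; since the Schatten classes form a complex interpolation scale with $[\cS^1,\mathcal{L}]_\theta = \cS^{p}$ and the $L^r$-spaces interpolate in the standard way, the Riesz–Thorin/Stein bilinear interpolation theorem yields the full range $1 \leq p,q,r \leq \infty$ with constant $1$. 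I expect the main obstacle to be the second endpoint: verifying the $L^1$-bound for the convolution of a trace-class operator with a merely bounded operator requires care, since one cannot simply use trace-norm estimates pointwise, and the rank-one reduction together with the $L^1$-integrability of the matrix coefficients is the delicate step that does the real work.
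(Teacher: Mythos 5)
The paper does not prove this proposition itself --- it quotes it from Werner's article, where it is established precisely by bilinear interpolation between endpoint cases --- so your overall strategy is the intended one. The problem is that both of your endpoints are mismatched with the relation $1+r^{-1}=p^{-1}+q^{-1}$, and one of them is false. For $p=q=1$ the relation forces $r=1$, not $r=\infty$; for $p=1$, $q=\infty$ it forces $r=\infty$, not $r=1$. The admissible region in the $(p^{-1},q^{-1})$-square is the triangle $p^{-1}+q^{-1}\geq 1$, whose vertices are $\cS^1\times\cS^1\to L^1$ and $\cS^1\times\mathcal{L}\to L^\infty$ (together with its mirror image $\mathcal{L}\times\cS^1\to L^\infty$). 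Your first argument, via $|\tr(AB)|\leq\|A\|_{\cS^1}\|B\|_{\mathcal{L}}$ and unitarity of $\rho(z)$ and $P$, is correct but proves the $(1,\infty,\infty)$ endpoint; it is not the $p=q=1$ case. Your second claimed endpoint, $\|S\star T\|_{L^1}\leq\|S\|_{\cS^1}\|T\|_{\mathcal{L}}$, is simply not true: taking $T=I$ gives $\alpha_z(PIP)=I$ and hence $S\star I(z)=\tr(S)$ for every $z\in\R^{2d}$, a nonzero constant whenever $\tr(S)\neq 0$, which belongs to no $L^1(\R^{2d})$. This is why you sensed that this step ``requires care''; no amount of care will make it work, because an $L^1$ bound on the output genuinely needs trace-class control of \emph{both} factors.

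The missing piece is that the hard endpoint is $\cS^1\times\cS^1\to L^1$, and the tools you reserved for the false endpoint are exactly what it needs. Writing $S=\sum_n s_n\, u_n\otimes v_n$ and $T=\sum_m t_m\, x_m\otimes y_m$ in singular value decompositions, one computes for rank-one factors that $(u\otimes v)\star(x\otimes y)(z)=\langle \rho(z)Px,v\rangle\langle u,\rho(z)Py\rangle$, and Cauchy--Schwarz together with Moyal's orthogonality relations $\int_{\R^{2d}}|\langle f,\rho(z)g\rangle|^2\,dz=\|f\|^2\|g\|^2$ gives $L^1$-norm at most $\|u\|\|v\|\|x\|\|y\|$; summing the singular-value series yields $\|S\star T\|_{L^1}\leq\|S\|_{\cS^1}\|T\|_{\cS^1}$. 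With the two correct endpoints in hand, your bilinear complex interpolation step (using $[\cS^1,\cS^\infty]_\theta=\cS^p$ and the standard interpolation of the $L^r$-scale) does go through with constant $1$ and covers exactly the triangle $p^{-1}+q^{-1}\geq 1$, which is the full range of the proposition.
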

The final ingredient of the proof is the following result, first used to derive an equivalence between classical and quantum decoupling.
\begin{theorem}[\cite{Samuelsen_24}, Cor. $3.2.1$]\label{thm:SamuelsenThm}
Let $\Omega\subseteq \R^{2d}$ be a bounded set, and let $1\leq p\leq \infty$. Then there exist $L^2$-normalised $g,h\in \sS(\R^{2d})$ and $C=C(\Omega)>0$ such that if $\tau\in \sS'(\R^{2d})$ and $\F_\sigma(\tau)$ is supported on $\Omega$, then
\[
\|L_\tau\|_{\cS^p}\leq C(\Omega)\|L_\tau\star (g\otimes h)\|_{L^p}.
\]
\end{theorem}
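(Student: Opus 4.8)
The strategy I would use is to establish a quantum version of Wiener's division lemma: reconstruct the operator $L_\tau$ from the function $F:=L_\tau\star(g\otimes h)$ by a single function–operator convolution with a fixed trace-class operator $\Psi=\Psi(\Omega)$, and then invoke Young's inequality in the direction that produces an operator from a function. Here I would use the function–operator convolution $f\star T=\int_{\R^{2d}}f(z)\,\alpha_z(T)\,dz$ together with the companion to Proposition \ref{Young op-op}, the function–operator Werner–Young inequality $\|f\star T\|_{\cS^r}\le\|f\|_{L^p}\|T\|_{\cS^q}$ with $1+r^{-1}=p^{-1}+q^{-1}$ (also due to Werner). Taking $q=1$ and $r=p$ gives $\|f\star T\|_{\cS^p}\le\|f\|_{L^p}\|T\|_{\cS^1}$, so it suffices to produce $\Psi\in\cS^1$, depending only on $\Omega$, and $L^2$-normalised $g,h$ with $F\star\Psi=L_\tau$ whenever $\F_\sigma(\tau)$ is supported in $\Omega$; the constant is then $C(\Omega)=\|\Psi\|_{\cS^1}$.

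The construction would be carried out on the Fourier side. Write $\F_W$ for the Fourier–Wigner transform of operators, normalised so that $\F_W(L_\mu)=\F_\sigma(\mu)$ and so that the two convolutions linearise as $\F_\sigma(S\star T)=\F_W(S)\F_W(T)$ and $\F_W(f\star T)=\F_\sigma(f)\F_W(T)$. Choosing $g=h=\varphi_0$ a normalised Gaussian, the function $A:=\F_W(\varphi_0\otimes\varphi_0)$ is, up to normalisation, the radar ambiguity function of $\varphi_0$, a nowhere-vanishing Gaussian. Since $\F_\sigma(F)=\F_W(L_\tau)\,A=\F_\sigma(\tau)\,A$, I would pick $\chi\in C_c^\infty(\R^{2d})$ with $\chi\equiv1$ on a neighbourhood of the compact set $\overline{\Omega}$, set $b:=\chi/A\in C_c^\infty(\R^{2d})\subset\sS(\R^{2d})$, and let $\Psi:=L_{\F_\sigma(b)}$ be the Weyl quantisation of the Schwartz symbol $\F_\sigma(b)$. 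Then $\F_W(\Psi)=b$, and
\[
\F_W(F\star\Psi)=\F_\sigma(F)\,\F_W(\Psi)=\F_\sigma(\tau)\,A\,\frac{\chi}{A}=\F_\sigma(\tau)\,\chi=\F_\sigma(\tau),
\]
using that $\chi\equiv1$ on $\operatorname{supp}\F_\sigma(\tau)\subseteq\Omega$. By injectivity of $\F_W$ this would give $F\star\Psi=L_\tau$ exactly.

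It then remains to note that $\Psi\in\cS^1$: since $\F_\sigma(b)\in\sS(\R^{2d})$, the operator $L_{\F_\sigma(b)}$ has a Schwartz Weyl kernel and is therefore trace class, with $\|\Psi\|_{\cS^1}=:C(\Omega)<\infty$ depending only on $\chi$ (hence on $\Omega$) and the fixed Gaussian. Combining the reconstruction identity with the function–operator Young inequality yields $\|L_\tau\|_{\cS^p}=\|F\star\Psi\|_{\cS^p}\le\|\Psi\|_{\cS^1}\|F\|_{L^p}=C(\Omega)\|L_\tau\star(g\otimes h)\|_{L^p}$, as claimed. The main obstacle is the division step itself: one must guarantee that the reciprocal $1/A$ can be localised to a Schwartz function $b$ (which is precisely where the non-vanishing of the ambiguity function and the compactness of $\overline{\Omega}$ are used) and that the resulting $\Psi$ genuinely lands in $\cS^1$. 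The identity $F\star\Psi=L_\tau$ must also be justified for a general tempered symbol $\tau$, where $L_\tau$ is a priori only an operator $\sS(\R^d)\to\sS'(\R^d)$; I would handle this through the injectivity of $\F_W$ on the relevant distribution class rather than by a direct operator computation.
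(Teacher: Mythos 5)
Your proposal is correct and follows essentially the same route as the cited source: the paper itself only quotes this result from \cite{Samuelsen_24}, but the mechanism there is exactly the quantum Wiener division lemma you reconstruct --- divide by the nowhere-vanishing ambiguity function of a Gaussian on a neighbourhood of $\overline{\Omega}$, quantise the resulting compactly supported Schwartz multiplier to get a trace-class $\Psi$ with $(L_\tau\star(g\otimes h))\star\Psi=L_\tau$, and apply the function--operator Werner--Young inequality with $q=1$. The only point worth flagging is cosmetic: the windows should live in $\sS(\R^d)$ (so that $g\otimes h$ is a rank-one operator on $L^2(\R^d)$), as you implicitly assume; the $\sS(\R^{2d})$ in the statement is a typo.
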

\begin{proof}[Proof of Theorem \ref{thm:MainThm}]
Let $\tau\in \sS'(\R^{2d})$. If $\F_\sigma(\tau)$ is compactly supported, then there exists a bounded subset $\Omega\subseteq\R^{2d}$ such that $\text{supp }(\F_\sigma(\tau))\subseteq \Omega$. 

Assume first that $L_\tau\in \cS^p$. If $\Psi\in C^\infty_c(\R^{2d})$ is a smooth cut-off function equal to $1$ on $\Omega$, then
\[
\F_\sigma(\tau)=\F_\sigma(\tau)\Psi,
\]
and thus $\tau=\tau*\F_\sigma(\Psi)$. By \eqref{eq:ConvEquiv}, and Theorem \ref{Young op-op}, it follows that
\begin{equation}\label{eq:TempDistBound}
\|\tau\|_{L^p}=\|\tau*\F_\sigma(\Psi)\|_{L^p}=\|L_\tau\star L_{\F_\sigma(\Psi)}\|_{L^p}\leq \|L_{\F_\sigma(\Psi)}\|_{\cS^1}\|L_\tau\|_{\cS^p}.
\end{equation}
Since the Weyl quantisation of a Schwartz function belongs to $\cS^1$ by \cite[Thm. C$.16$]{Zworski}, it follows that $\tau\in L^p(\R^{2d})$.

Let $\tau\in L^p(\R^{2d})$. Then by \eqref{eq:ConvEquiv} and Theorem \ref{thm:SamuelsenThm}, it follows that
\begin{equation}\label{eq:WeylBound}
\|L_{\tau}\|_{\cS^p}\leq C(\Omega)\|L_\tau\star(g\otimes h)\|_{L^p}=C(\Omega)\|\tau* \mathcal{W}(g,h)\|_{L^p}\leq C(\Omega)\|\mathcal{W}(g,h)\|_{L^1}\|\tau\|_{L^p},
\end{equation}
from the classical Young's inequality for convolutions. Since the cross-Wigner distribution of two Schwartz functions is again a Schwartz function \cite[Thm. $11.2.5$]{Grochenig}, it follows that $L_\tau\in \cS^p$.

Assume now that $\tau\in C_0(\R^{2d})$. Since $C_0(\R^{2d})$ is the $L^\infty$-closure of $\sS(\R^{2d})$, there exists a sequence $\{\varphi_n\}\subseteq \sS(\R^{2d})$ converging uniformly to $\tau$. By \eqref{eq:WeylBound}, the sequence of compact operators $\{L_{\varphi_n}\}$ converges to $L_\tau$ in the norm topology on $\mathcal{L}(L^2(\R^d))$ which implies that $L_\tau$ is a compact operator.

If $L_\tau$ is a compact operator, then there exists a sequence of Schwartz operators $\{L_{\varphi_n}\}$ converging to $L_\tau$ in the norm topology of $\mathcal{L}(L^2(\R^d))$ by the density of Schwartz operators in the finite rank operators \cite{Keyl-Kiukas-Werner_16}. By \eqref{eq:TempDistBound}, it follows that $\varphi_{n}$ converges uniformly to $\tau$. This implies that $\tau\in C_0(\R^{2d})$ as it is the uniform closure of $\sS(\R^{2d})$.
\end{proof}

\section*{Acknowledgement}
The author would like to thank Sigrid Grepstad and Franz Luef for valuable feedback on earlier versions of the manuscript.

\printbibliography

\end{document}